\newtheorem{thm}{Theorem}[section]
\theoremstyle{definition}
\newtheorem{defn}{Definition}[section]
\newtheorem{exmp}{Example}[section]
\newcommand\cl{\mbox{cl}}
\newbox\bdots
\newbox\fdots
\title{A Fundamental Theorem on Graph Operators}
\author{Severino V.~Gervacio \\
Department of Mathematics and Statistics\\
De La Salle University\\
0922 Manila, Philippines}
\date{}
\begin{document}
\maketitle

\begin{abstract}
A graph operator is a function $\Gamma$ defined on some set of graphs such that whenever two graphs $G$ and $H$ are isomorphic, written $G\simeq H$, then $\Gamma(G)\simeq \Gamma(H)$. For a graph $G$ not in the domain of $\Gamma$, we put $\Gamma(G)=\emptyset$.  Also, let us define $\Gamma^0(G)=G$, and for any integr $k\ge1$, $\Gamma^k(G)=\Gamma(\Gamma^{k-1}(G))$

We prove that if $\Gamma$ is a graph operator, then the sequence $\langle \Gamma^k(G)\rangle_{k=0}^\infty$ has only three possible types of behaviour.  Either $\Gamma^k(G)=\emptyset$ for some integer $k>0$, or $\displaystyle\lim_{k\to\infty}|V(\Gamma^k(G))|=\infty$, or there exist integers $m\ge0$, $p>0$ such that the graphs $\Gamma^j(G)$ are non-isomorphic ($0\le j\le m)$, and $\Gamma^{n+p}\simeq \Gamma ^n(G)$ for all integers $n\ge m$. We illustrate this using two new graph operators, namely, the path graph operator and the claw graph operator.\\

\noindent\textbf{Keywords}: graph operator, induced subgraph,  intersection graph\\
		
\noindent\textbf{Mathematics subject classification}: (2010) 05C62
\end{abstract}

\section{Introduction}
A (simple) \emph{graph} $G$ is an ordered pair $G=\langle V(G), E(G)\rangle$ where $V(G)$ is a  non-empty finite set of elements called \emph{vertices} and $E(G)$ is a set (possibly empty) of 2-subsets of $V(G)$, called \emph{edges}.  Whenever convenient, we shall denote an edge $\{a,b\}$ in $E(G)$ by $ab$ or $ba$.

The notations $P_n, C_n, K_n$ are used to denote the path of order $n$, the cycle of length $n$, and the complete graph of order $n$, respectively.  

The symbol $\overline G$ denotes the complement of the graph $G$.  .

If two graphs $G$ and $H$ are isomorphic, we shall write $G\simeq H$.

Readers may refer to the book by Harary \cite{Harary} for the definitions of concepts that are not given in this paper.

\section{Graph Operators}
Some graph operators will be mentioned in this paper and two new ones will be defined and discussed to illustrate a fundamental theorem on graph  operators. Let us define formally a graph operator.

\begin{defn}
A \emph{graph operator} is a function $\Gamma$ defined on a set of graphs such that $\Gamma(G)\simeq \Gamma(H)$ whenever $G\simeq H$.
\end{defn}
 
If $\Gamma$ is a graph operator, we put $\Gamma^0(G)=G$ and for each integer $k\ge1$, let $\Gamma^k(G)=\Gamma(\Gamma^{k-1}(G)$.  If $G$ is not in the domain of $\Gamma$, let us put $\Gamma(G)=\emptyset$.  We also define $\Gamma(\emptyset)=\emptyset$.  Thus, the sequence $\langle \Gamma^k(G)\rangle_{k=0}^\infty$ is well-defined.  This paper studies the behaviour of this sequence.

A classic graph operator is the \emph{line graph operator} defined for all graphs with at least one edge.  If $G$ is a graph with at least one edge, the line graph of $G$, denoted by $\mathcal L(G)$, is the graph whose vertices are the edges of $G$ and where two vertices $e_1$ and $e_2$ in $V(\mathcal L(G))$ are adjacent if and only if $e_1\ne e_2$ and $e_1\cap e_2\ne\emptyset$.  Please refer to Figure \ref{fig:linegraph} for an illustration of line graph.

\begin{figure}[h]
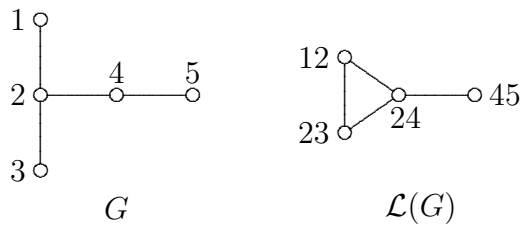

$$\pic
\SetUnits[cm] (1,1,1)
\Path (0,1) (0,0) (1,0) (2,0)
\Vertex(0,-1)
\Edge(0,-1) (0,0)
\Align[c] (1) (-0.3,1)
\Align[c] (2) (-0.3,0)
\Align[c] (3) (-0.3,-1)
\Align[c]  (4) (1,0.3)
\Align[c] (5) (2,0.3)
\Align[c] ($G$) (1,-1.5)
 
\Path(4,0.5) (4,-0.5) (4.71,0) (5.71,0)
\Edge(4,0.5) (4.71,0)
\Align[r] (12) (3.8,0.5)
\Align[r] (23) (3.8,-0.5)
\Align[c] (24) (4.8,-0.3)
\Align[l] (45) (5.9,0)
\Align[c] ($\mathcal L(G)$) (5,-1.5)  
\cip$$
\caption{A graph $G$ and its line graph $\mathcal L(G)$}
\label{fig:linegraph}
\end{figure}

Krausz \cite{krausz} showed that a graph $G$ is a line graph, \emph{i.e.}, $\mathcal L(H)\simeq G$ for some graph $H$, if and only if the edges of $G$ can be partitioned into cliques (complete subgraphs) such that each vertex of $G$ belongs to only one or two cliques.

Let us set $\mathscr L^0(G)=G$.  If $G$ has no edges, let us write $\mathscr L(G)=\emptyset$.  Put $\mathscr L(\emptyset)=\emptyset$. For any integer $k\ge1$, define $\mathcal L^k(G)=\mathcal L(\mathcal L^{k-1}(G)$) and consider the sequence of graphs $\langle G, \mathcal L(G), \mathcal L^2(G), \mathcal L^3(G), \cdots\rangle$. Van Rooij and Wilf \cite {vanRooij} showed that when $G$ is any connected graph, only four behaviours are possible for this sequence:   
	\begin{enumerate}
		\item If $G\simeq C_n$ is a cycle, then every element of the sequence is isomorphic to $C_n$.  These are the only connected graphs for which $\mathcal L(G)\simeq G$.  
		\item If $G\simeq K_{1,3}$, then $\mathcal L^k(G)\simeq C_3$ for all $k\ge1$.  
		\item If $G\simeq P_n$ is a path, then each subsequent graph in the sequence is a path one unit shorter than the previous graph and the sequence terminates with a null graph.  
		\item In all remaining cases, the sizes of the graphs in the sequence eventually increases without bound.
	\end{enumerate}

Let us define formally more concept related to graph operators. 

\begin{defn}
Let $\Gamma$ be a graph operator and $G$ a graph in the domain of $\Gamma$. The graph $G$ is
\begin{enumerate}
\item \emph{$\Gamma$-vanishing} if $\Gamma^k(G)=\emptyset$ for some integer $k>0$;  
\item \emph{$\Gamma$-expanding} if $\displaystyle\lim_{k\to\infty}|V(\Gamma^k(G))|=\infty$;  
\item \emph{$\Gamma$-periodic }if $\Gamma^k(G)\ne\emptyset$ for all integers $k\ge0$, and there exist integers $m\ge0$, $p>0$ such that  $\Gamma^{n+p}(G)\simeq \Gamma^n(G)$ for all integers $n\ge m$.  The smallest $m$ and $p$ satisfying these conditions are called the \emph{tail length} and \emph{period} of $G$, respectively.
\end{enumerate}
\end{defn}
 
We see then that cycles are $\mathcal L$-periodic.  Here, the tail length is $m=0$ and the period is $p=1$.  The claw $K_{1,3}$ is $\mathcal L$-periodic, with tail length $m=1$, and period $p=1$.

The paths $P_n$ are $\mathcal L$-vanishing, and all other connected graphs different from $C_n$, $K_{1,3}$, $P_n$ are $\mathcal L$-expanding.

If a graph $G$ has more than one component, it is still true that $G$ is either $\mathcal L$-vanishing, or $\mathcal L$-expanding, or $\mathcal L$-periodic. 

Another example of a graph operator is the \emph{triangle graph} of a graph apparently introduced independently by Pullman \cite{Pullman}, Egawa and Ramos \cite{EgawaRamos}, and Balakrishnan \cite{Balakrishnan}. The triangle graph of a graph, denoted by $\mathscr T(G)$ is the graph whose vertices are the triangles (cycle of length 3) in $G$ and two vertices in $\mathscr T(G)$ are adjacent if and only if they are distinct triangles sharing a common edge.
	
	The triangle graph of a graph $G$ is an induced subgraph of the \emph{cycle graph} $\mathscr C(G)$ introduced in 1983 \cite{gervacio, Egawa, Seema}.  The vertices of $\mathscr C(G)$ are all the induced cycles in $G$ and two vertices in $\mathscr C(G)$ are adjacent if and only if they are distinct induced cycles sharing at least one common edge.
	
	In both the triangle graph operator and the cycle graph operator, graphs that are  vanishing, periodic, and expanding  are known to exist.
	
\newpage

\section{Main Result}

We now state the main result of this paper.  
\begin{thm}[Fundamental Theorem on Graph Operators]
Let $\Gamma$ be a graph operator, and $G$ a graph in the domain of $\Gamma$.  Then $G$ is either $\Gamma$-vanishing, or $\Gamma$-expanding, or $\Gamma$-periodic.
\end{thm}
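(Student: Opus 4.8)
The plan is to dichotomize according to the behaviour of the integer sequence $a_k=|V(\Gamma^k(G))|$. First I would dispose of the vanishing case: if $\Gamma^k(G)=\emptyset$ for some $k>0$, then by definition $G$ is $\Gamma$-vanishing and there is nothing more to prove. So for the remainder I would assume $\Gamma^k(G)\neq\emptyset$ for every $k\ge0$; in particular every $a_k$ is a well-defined positive integer.

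Under this assumption I would split on whether $a_k\to\infty$. If $\lim_{k\to\infty}a_k=\infty$, then $G$ is $\Gamma$-expanding and we are done. The interesting case is when this limit fails. Negating the definition of the limit, the failure of $a_k\to\infty$ means precisely that there is a bound $B$ and infinitely many indices $k$ with $a_k\le B$; equivalently, the sequence $\langle\Gamma^k(G)\rangle$ contains infinitely many terms, each having at most $B$ vertices.

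The key observation is that, up to isomorphism, there are only finitely many graphs on at most $B$ vertices. Hence by the pigeonhole principle two of the infinitely many terms bounded by $B$ must be isomorphic: there exist indices $i<j$ with $\Gamma^i(G)\simeq\Gamma^j(G)$. Setting $p=j-i>0$, I would then invoke the single defining property of a graph operator, namely that $\Gamma$ preserves isomorphism. Applying $\Gamma$ to the isomorphism $\Gamma^i(G)\simeq\Gamma^{i+p}(G)$ and iterating yields, by a straightforward induction on $t$, that $\Gamma^{i+t}(G)\simeq\Gamma^{i+t+p}(G)$ for every $t\ge0$. Writing $m=i$, this says $\Gamma^{n+p}(G)\simeq\Gamma^n(G)$ for all $n\ge m$, which together with our standing assumption that no term is empty is exactly the definition of $G$ being $\Gamma$-periodic.

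I expect the one genuinely delicate step to be the reduction in the middle paragraph: one must correctly negate $a_k\to\infty$ to extract a subsequence of \emph{bounded order}, rather than carelessly asserting that the whole sequence is bounded. Everything after that is forced, since the finiteness of isomorphism classes on a bounded vertex set gives a repetition by pigeonhole, and the isomorphism-preserving property of $\Gamma$ propagates a single coincidence into eventual periodicity. It is worth remarking, as a consistency check, that periodicity automatically forces the orders $a_k$ to be eventually periodic and hence bounded, so the three alternatives are genuinely separated by this dichotomy on $\langle a_k\rangle$.
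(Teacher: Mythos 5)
Your proof is correct and follows essentially the same route as the paper's: pigeonhole on the finitely many isomorphism classes of graphs of bounded order to obtain a repetition $\Gamma^i(G)\simeq\Gamma^{i+p}(G)$, then propagate it forward using the isomorphism-preserving property of $\Gamma$. In fact you are more careful than the paper on exactly the point you flag as delicate: the paper asserts outright that non-expansion forces the whole sequence of orders to be bounded, whereas the correct negation of $\lim_{k\to\infty}|V(\Gamma^k(G))|=\infty$ yields only an infinite subsequence of bounded order --- which, as you observe, is all the pigeonhole step needs, with boundedness of the full sequence then following a posteriori from eventual periodicity.
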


In the next two sections, we shall introduce two new graph operators and use them to illustrate the fundamental theorem.  The proof of the fundamental theorem will be given in a later section

\section{Path Graph Operator}
We have illustrated the fundamental theorem on graph operators using the line graph operator.  

We shall give another illustration using the path graph operator.
\begin{defn}
Let $G$ be a graph with at least one edge.  The \emph{path graph of $G$}, denoted by $\mathscr P(G)$ is the graph whose vertices are the induced paths in $G$ of length at least 1.  Two vertices in $\mathscr P(G)$ are adjacent if they are distinct induced paths sharing at least one edge.
\end{defn}
 
Figure \ref{fig:pathgraph} shows an example of a graph $G$ and its path graph $\mathscr P(G)$.

\begin{figure}[h]
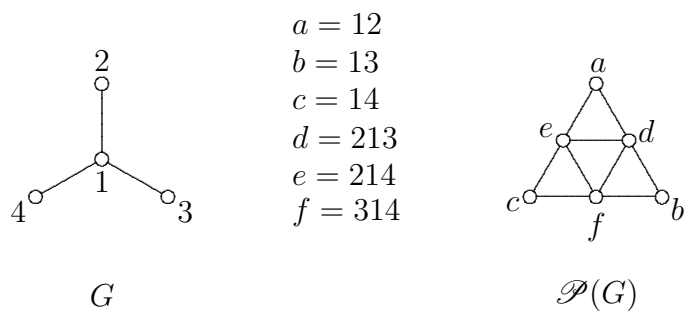

$$\pic
\SetUnits[cm] (1,1,1)
\Align[c] ($\mathscr P(G)$) (0,-1.8)
\Path(-0.87,-0.5) (0,-0.5)
\Edge(0,-0.5) (0.87,-0.5) (0,-0.5) (0.43,0.25)
\Align[c] ($c$) (-1.1,-0.6)
\Align[c] ($f$) (0,-0.9)
\Rotate[120] (0,0)
\Path(-0.87,-0.5) (0,-0.5)
\Edge(0,-0.5) (0.87,-0.5) (0,-0.5) (0.43,0.25)
\Align[c] ($b$) (-1.1,-0.6)
\Align[c] ($d$) (0,-0.76)
\Rotate[-120] (0,0)
\Path(-0.87,-0.5) (0,-0.5)
\Edge(0,-0.5) (0.87,-0.5) (0,-0.5) (0.43,0.25)
\Align[c] ($a$) (-1.1,-0.6)
\Align[c] ($e$) (0,-0.76)
\offrotate
\Align[l] ($a=12$) (-4,1.8) 
\Align[l] ($b=13$) (-4,1.3)  
\Align[l] ($c=14$) (-4,0.8)   
\Align[l] ($d=213$) (-4,0.3) 
\Align[l] ($e=214$) (-4,-0.2) 
\Align[l] ($f=314$) (-4,-0.7) 
\Translate(0,0) (-6.5,0)
\Align[c] ($G$) (0,-1.8)
\Path(0,1) (0,0) (0.87,-0.5)
\Vertex(-0.87,-0.5)
\Edge(0,0) (-0.87,-0.5)
\Align[c] (1) (0,-0.3)
\Align[c] (2) (0,1.3)
\Align[c] (3) (1.1,-0.7)
\Align[c] (4) (-1.1,-0.7)
\cip$$
\caption{A graph $G$ and its path graph $\mathscr P(G)$}
\label{fig:pathgraph}
\end{figure}

In \cite{jcdcggg} it was shown that a graph $G$ in the domain of $\mathscr P$ can only be either $\mathscr P$-vanishing, or $\mathscr P$-expanding, or $\mathscr P$-periodic.  More precisely, the following result was established in that paper.  

\newpage
\begin{thm}
A graph $G$ with at least one edge is
\begin{enumerate}
\item $\mathscr P$-vanishing if and only if every component of $G$ is a complete graph;
\item $\mathscr P$-expanding if and only if $G$ contains two induced paths isomorphic to $P_3$ that share a common edge;
\item $\mathscr P$-periodic if and only if exactly one component of $G$ is a graph isomorphic to $K_q-e$, $q\ge3$, and all other components are complete graphs.
\end{enumerate}
\end{thm}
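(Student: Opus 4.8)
The plan is to combine the Fundamental Theorem on Graph Operators with two structural reductions, so that the three biconditionals follow from three one-directional implications. First I record two lemmas. (Component lemma) Since every induced path is connected and two induced paths sharing an edge lie in one component, $\mathscr P(G_1\sqcup G_2)\simeq\mathscr P(G_1)\sqcup\mathscr P(G_2)$; hence $|V(\mathscr P^k(G))|=\sum_i|V(\mathscr P^k(G_i))|$ over the components $G_i$, so $G$ is vanishing iff every component is, expanding iff some component is, and periodic otherwise. This reduces every assertion to connected graphs. (Monotonicity lemma) If $H$ is an induced subgraph of $G$, written $H\preceq G$, then each induced path of $H$ is an induced path of $G$ and adjacency is intrinsic, so $\mathscr P(H)\preceq\mathscr P(G)$ and hence $\mathscr P^k(H)\preceq\mathscr P^k(G)$ for all $k$; in particular ``expanding'' is inherited by supergraphs and pulls back along $\mathscr P$. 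Now set $A=\{$all components complete$\}$, $B=\{G$ contains two induced $P_3$'s sharing an edge$\}$, and $C$ = everything else; these partition the domain ($A\cap B=\emptyset$ since complete graphs have no induced $P_3$). By the Fundamental Theorem it suffices to show $A\Rightarrow$ vanishing, $B\Rightarrow$ expanding, $C\Rightarrow$ periodic; the trichotomy then forces the matching, i.e.\ the three biconditionals.

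The two easy implications are short computations together with one classification lemma. For $A$: in a complete graph the only induced paths are its edges, no two of which share an edge, so $\mathscr P(K_n)$ is edgeless and $\mathscr P^2(K_n)=\emptyset$; by the component lemma $\mathscr P^2(G)=\emptyset$, so $G$ is vanishing. For $C$ I first prove the \emph{classification lemma}: a connected graph with at least two non-edges is of type $B$. This is a short case analysis on the complement---two non-edges either share a vertex or are disjoint, and in each case connectivity lets one exhibit two induced $P_3$'s sharing an edge (the disjoint case already gives a $C_4$ when $q=4$). Hence a connected graph that is not of type $B$ has at most one non-edge, i.e.\ is complete or $\simeq K_q-e$; with the component lemma this identifies $C$ as exactly the graphs each of whose components is complete or $\simeq K_q-e$, with at least one of the latter kind. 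Finally $\mathscr P(K_q-e)\simeq(q-2)P_3\sqcup\binom{q-2}{2}K_1$ and $\mathscr P(P_3)\simeq P_3$, so $\mathscr P^2(K_q-e)\simeq(q-2)P_3$; by the component lemma $\mathscr P^2(G)$ is a disjoint union of $P_3$'s, a fixed point of $\mathscr P$, so $G$ is periodic.

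The implication $B\Rightarrow$ expanding is the crux, and I reduce it to a single seed. The union of two edge-sharing induced $P_3$'s spans exactly four vertices and induces one of $P_4$, $C_4$, $K_{1,3}$, or the paw (the four connected type-$B$ graphs on four vertices), and for each such $M$ one checks directly that $\mathscr P(M)$ contains an induced $P_4$; thus every type-$B$ graph $G$ satisfies $P_4\preceq\mathscr P(G)$, and by the monotonicity lemma it is enough to prove that $P_4$ is expanding. The vertex count $|V(\mathscr P^k(G))|$ is \emph{not} monotone (isolated and $K_2$ components waste vertices and then vanish), so instead I track $c(G):=$ the number of induced $P_3$'s. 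The map sending an induced $P_3$ $x\,m\,y$ of $G$ to the induced $P_3$ $(xm)\,(xmy)\,(my)$ of $\mathscr P(G)$ is injective (it is recovered from its centre $xmy$), giving $c(\mathscr P(G))\ge c(G)$; and if $G$ has an induced $P_4$ on $1234$, then $(12)\,(1234)\,(34)$ is an induced $P_3$ of $\mathscr P(G)$ whose centre is a $P_4$, hence outside the image, so $c(\mathscr P(G))\ge c(G)+1$. Since $\mathscr P(P_4)$ contains an induced $P_4$, so does every $\mathscr P^k(P_4)$, whence $c(\mathscr P^k(P_4))$ strictly increases; as $|V(\mathscr P^{k+1}(P_4))|\ge c(\mathscr P^k(P_4))\to\infty$, the graph $P_4$ is expanding.

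The main obstacle is precisely this expanding direction: proving \emph{genuine} unbounded growth rather than a one-step increase. The resolution---replacing the non-monotone vertex count by the monotone count of induced $P_3$'s, and exhibiting one explicit ``extra'' $P_3$ produced by any induced $P_4$---is the decisive idea, with the four-vertex reduction supplying the induced $P_4$ in the first place. The remaining ingredients (the component and monotonicity lemmas, the classification lemma, and the closed forms for $\mathscr P(K_q-e)$ and $\mathscr P(P_3)$) are routine.
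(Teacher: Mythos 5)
First, a point of reference: the paper does not prove this theorem at all --- it is quoted from \cite{jcdcggg} --- so there is no in-paper proof to compare yours against. Judged on its own, most of your argument is sound, and your treatment of the expanding case is the strongest part: reducing an arbitrary type-$B$ graph to an induced $P_4$ inside $\mathscr P(G)$ via the four four-vertex configurations ($P_4$, $C_4$, $K_{1,3}$, paw), and then replacing the non-monotone vertex count by the monotone count of induced $P_3$'s, with one extra induced $P_3$ of the form $(12)\,(1234)\,(34)$ manufactured from each induced $P_4$, is a complete and correct proof of unbounded growth. The component and monotonicity lemmas are also correct, since adjacency in $\mathscr P$ is intrinsic to the paths.

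The genuine gap is in the closing step ``the trichotomy then forces the matching.'' That inference requires your sets $A$, $B$, $C$ to coincide with the three right-hand conditions of the theorem, and your $C$ does \emph{not} coincide with condition 3. You correctly identify $C$ as the graphs all of whose components are complete or isomorphic to $K_q-e$, with at least one component of the latter kind, and you correctly show every such graph is periodic; but condition 3 as printed demands \emph{exactly one} component isomorphic to $K_q-e$ with all remaining components complete. These differ: $G=(K_4-e)\sqcup(K_4-e)$ lies in your $C$ and is indeed $\mathscr P$-periodic (your own computation gives $\mathscr P^2(G)\simeq 4P_3$, a fixed point), yet it fails condition 3. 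So your argument does not establish part 3 as stated --- it actually refutes its ``only if'' direction --- and you should have flagged the discrepancy instead of silently substituting your $C$ for the stated condition; as written, the last sentence of your first paragraph is false. (The statement is evidently intended either for connected $G$ or with ``at least one component $\simeq K_q-e$ and every other component complete or $\simeq K_q-e$,'' in which case your proof goes through.) A secondary issue: the classification lemma (connected with at least two non-edges implies type $B$) is load-bearing and only sketched; it is true, but the ``short case analysis'' on two non-edges only closes after the preliminary reduction that a non-adjacent pair at distance at least $3$ already yields two consecutive induced $P_3$'s on a shortest path, and the shared-vertex case still needs a further split on whether the two relevant common neighbours are adjacent.
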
 
 
The graph $G$ in Figure \ref{fig:pathgraph} has two induced paths $213$ and $314$, both isomorphic to $P_3$, and they share the edge $13$.  Therefore, according to the above theorem, $G$ is $\mathscr P$-expanding.  This means that if we apply the operator $\mathscr P$ iteratively the order of the graph grows larger and larger without bound.

\section{Claw Graph Operator}
An induced subgraph in a graph that is isomorphic to $K_{1,3}$ is called a claw \cite{King}
\begin{defn}
The \emph{claw graph} of a graph $G$, denoted by $\mbox  {cl}\,(G)$, is the graph whose vertices are the claws in $G$, where two vertices are adjacent if and only if they are distinct claws sharing a common edge.
\end{defn}

\begin{exmp}{Planar grids $P_n\times P_2$ are cl-vanishing graphs}
\begin{figure}[h]
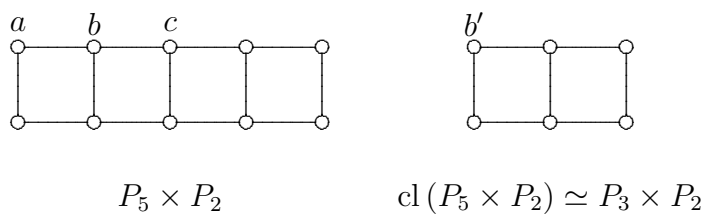

$$\pic
\SetUnits[cm] (1,1,1)
\SimpleVertex{black}
\PlanarGrid(5,2) [1cm] (0,0)
\Align[c] ($a$) (-2,0.8)
\Align[c] ($b$) (-1,0.8)
\Align[c] ($c$) (0,0.8)
\Align[c] ($P_5\times P_2$) (0,-1.5)
\Align[c] ($b'$) (4,0.8)
\PlanarGrid(3,2) [1cm] (5,0)
\Align[c] (cl$\,(P_5\times P_2)\simeq P_3\times P_2$) (5,-1.5)
\cip$$
\caption{Claw-vanishing planar grid}
\label{fig:ladder}
\end{figure}
\end{exmp}

\begin{exmp}{The Petersen graph is cl-periodic, with period 1}
\begin{figure}[h]
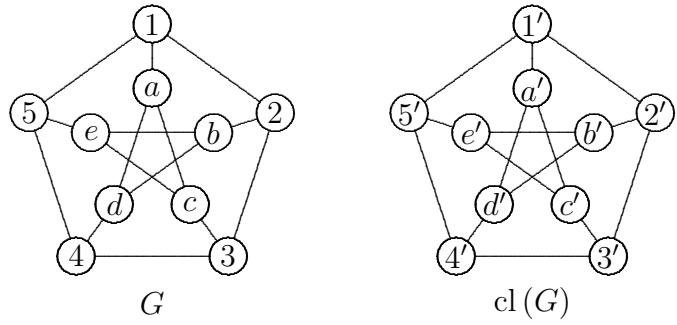

$$\pic
\SetUnits[cm] (1,1,1)
\VertexRadius7pt
\SimpleVertex{black}
\Petersen(5,2) [2cm] (0,0)
\Align[c] ($G$) (0,-2)
\Align[c] (1) (0,1.7)
\Align[c] ($a$) (0,0.84)
\Rotate[-72] (0,0)
\Align[c] (2) (0,1.7)
\Align[c] ($b$) (0,0.84)
\Rotate[-144] (0,0)
\Align[c] (3) (0,1.7)
\Align[c] ($c$) (0,0.84)
\Rotate[144] (0,0)
\Align[c] (4) (0,1.7)
\Align[c] ($d$) (0,0.84)
\Rotate[72] (0,0)
\Align[c] (5) (0,1.7)
\Align[c] ($e$) (0,0.84)
\offrotate
\Petersen(5,2) [2cm] (5,0)
\Align[c] (cl$\,(G)$) (5,-2)
\Align[c] ($1'$) (5,1.7)
\Align[c] ($a'$) (5,0.84)
\Rotate[-72] (5,0)
\Align[c] ($2'$) (5,1.7)
\Align[c] ($b'$) (5,0.84)
\Rotate[-144] (5,0)
\Align[c] ($3'$) (5,1.7)
\Align[c] ($c'$) (5,0.84)
\Rotate[144] (5,0)
\Align[c] ($4'$) (5,1.7)
\Align[c] ($d'$) (5,0.84)
\Rotate[72] (5,0)
\Align[c] ($5'$) (5,1.7)
\Align[c] ($e'$) (5,0.84)
\cip$$
\caption{The Petersen graph}
\label{fig:Petersen}
\end{figure}
\end{exmp}
 
In Figure \ref{fig:Petersen}, $1'$ denotes the unique claw having 1 as the central vertex. generally, for a vertex $x$ in the graph $G$, we denote by $x'$ the unique claw having $x$ as the central vertex.
\newpage
\begin{thm}
If $G$ is a triangle-free cubic graph, then cl$\,(G)\simeq G$. \label{thm:periodic}
\end{thm}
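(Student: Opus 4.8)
The plan is to exhibit an explicit isomorphism, namely the map $\varphi$ sending each vertex $x$ of $G$ to the unique claw $x'$ centered at $x$, and to show it is an isomorphism from $G$ onto $\cl(G)$. The argument splits naturally into two stages: first verifying that $\varphi$ is a well-defined bijection between the vertex sets, and then checking that it preserves and reflects adjacency.

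First I would argue that the claws of a triangle-free cubic graph are in bijection with its vertices. Since $G$ is cubic, each vertex $x$ has exactly three neighbors, say $N(x)=\{a,b,c\}$; since $G$ is triangle-free, no two of $a,b,c$ are adjacent, so the four vertices $x,a,b,c$ induce a copy of $K_{1,3}$ with $x$ as center. This produces the claw $x'$ and shows that $\varphi$ is defined on all of $V(G)$. Conversely, every claw $C$ has a unique vertex of degree three within $C$, namely its center; because $G$ is cubic, the three neighbors of that center inside $C$ are forced to be all three of its neighbors in $G$. Hence each claw is recovered uniquely from its center, and $\varphi$ is a bijection between $V(G)$ and $V(\cl(G))$.

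Next I would establish the adjacency correspondence, using the key structural fact that every edge of the claw $x'$ is incident to its center $x$. Therefore, if two distinct claws $x'$ and $y'$ share a common edge, that edge is incident to both $x$ and $y$ and must equal the edge $xy$, so $x$ and $y$ are adjacent in $G$. Conversely, if $x$ and $y$ are adjacent in $G$, then $xy$ is an edge of both $x'$ and $y'$, so these claws share it and are adjacent in $\cl(G)$. Thus $x'$ and $y'$ are adjacent in $\cl(G)$ if and only if $x$ and $y$ are adjacent in $G$, which together with the bijection of the previous step shows that $\varphi$ is an isomorphism.

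I do not expect a serious obstacle, but the step requiring the most care is the claim that every claw is centered at a vertex all three of whose $G$-neighbors serve as the leaves. This is exactly where both hypotheses enter: cubicity prevents a vertex of larger degree from being the center of several distinct claws, which would destroy injectivity, while triangle-freeness guarantees that the neighborhood of each vertex is independent, so that $\{x\}\cup N(x)$ genuinely induces $K_{1,3}$ rather than a graph with a chord. Once this correspondence between claws and vertices is pinned down, the observation that all edges of a claw pass through its center makes the adjacency verification immediate.
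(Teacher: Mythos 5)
Your proposal is correct and follows essentially the same route as the paper: the map $x\mapsto x'$ sending each vertex to the unique claw centered at it, with triangle-freeness giving that $\{x\}\cup N(x)$ induces $K_{1,3}$, cubicity giving the bijection with $V(\cl(G))$, and the observation that two claws can only share an edge joining their centers handling adjacency. Your write-up is in fact slightly more explicit than the paper's in justifying why every claw arises as some $x'$ (unique degree-three center, forced leaf set), but the argument is the same.
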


\begin{proof}
Let $G$ be a triangle-free cubic graph.  If $x$ is any vertex of $G$, then $x$ together with its three neighbors induce a claw with central vertex at $x$ since $G$ has no triangles (subgraph isomorphic to the cycle $C_3$). Let us denote this claw by $x'$.  Note that $V(\cl(G))=\{x'|x\in V(G)\}$.  This follows from the fact that $G$ is 3-regular.  Now, if $x$ and $y$ are adjacent vertices in $G$, then $x,$ and $y'$ ar e claws in $G$ sharing the edge $xy$.  Therefore, $x'y'$ is an edge of $\cl(G)$.  Conversely, if $x'y'$ is an edge in $\cl(G)$, then $x'$ and $y'$ must be claws in $G$ sharing an edge.  Now, $x$ and $y$ are central vertices of claws in $G$.  If they share a common edge, it must be the edge $xy$. So, we see that two vertices $x$ and $y$ are adjacent in $G$ if and only if the claws $x'$ and $y'$ are adjacent in $\cl(G)$.  This means that the mapping $x\mapsto x'$ is an isomorphism of $G$ onto $\cl(G)$.
\end{proof}
 
Thus, we have an infinite class of cl-periodic graphs with period 1.  In particular, all triangle-free generalized Petersen graphs \cite{GeneralizedPetersen} are cl-periodic.\\
 
\begin{exmp}{Some Generalized Petersen Graphs}
$$\pic
\SetUnits[cm] (1,1,1)
\SolidVertex{xanadu}
\Petersen(7,2) [1.3cm] (0,0)
\Align[c] ($PG(7,2)$) (0,-2)
 
\Petersen(9,4) [1cm] (3.95,0)
\Align[c] ($PG(9,4)$) (3.95,-2)
 
\Petersen(11,3) [0.8cm] (8,0)
\Align[c] ($PG(11,3)$) (8,-2)
\cip$$
\end{exmp}

\begin{thm}
Let $G$ be a triangle-free cubic graph. Obtain a graph $H$ from $G$ by adding a new vertex $x$.  Join $x$ by an edge to any vertex $y$ in $G$ and make $x$ adjacent to any two neighbors of $x$ in $G$.  Then $H$  is $\mbox{\rm cl}$-expanding.
\end{thm}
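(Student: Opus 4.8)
The plan is to invoke the Fundamental Theorem and reduce the claim to a single growth statement. Write $N_G(y)=\{a,b,c\}$ and let $H$ be obtained by adding the apex $x$ with $x\sim y,a,b$. Since a $\cl$-periodic graph has bounded orders $|V(\cl^{k})|$ and a $\cl$-vanishing graph has orders eventually $0$, it is enough to prove that $|V(\cl^{k}(H))|\to\infty$; I will in fact show this sequence is strictly increasing. Because $G$ is triangle-free and cubic we have $a\not\sim b$, so the four vertices $x,y,a,b$ induce a $K_4-e$ (the missing edge being $ab$), and this is the only place where $H$ departs from the situation of Theorem \ref{thm:periodic}.

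The base step is an explicit claw count. Away from the gadget every vertex $v$ has the same neighbourhood as in $G$, its three neighbours are independent, and it is the centre of a unique claw $v'$, exactly as in Theorem \ref{thm:periodic}. At the gadget the neighbourhoods change: $N_H(x)=\{y,a,b\}$ induces the path $a\,y\,b$, so $x$ is the centre of no claw; $N_H(y)=\{a,b,c,x\}$ induces the path $a\,x\,b$ together with the isolated vertex $c$, giving the single claw $y'=\{y;a,b,c\}$; and $N_H(a)=\{y,p,q,x\}$ (where $N_G(a)=\{y,p,q\}$, with $p,q\neq b$) induces the single edge $yx$ together with isolated $p,q$, giving two claws $a_1'=\{a;y,p,q\}$ and $a_2'=\{a;p,q,x\}$, and symmetrically two claws $b_1',b_2'$ at $b$. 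Summing, $\cl(H)$ has $|V(H)|+1$ vertices, so the order goes up by one at the first step.

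The heart of the argument is that the apex gadget reproduces, and in fact doubles. I will check that the four claws $a_1',a_2',p',q'$ again induce a $K_4-e$ in $\cl(H)$ (the missing edge being $p'q'$, as $p\not\sim q$), that the apex role is played by $a_2'$ — whose only neighbours in $\cl(H)$ are $a_1',p',q'$, with neighbourhood inducing the path $p'\,a_1'\,q'$ — and, more precisely, that the map $v\mapsto v'$ (with $a\mapsto a_1'$, $b\mapsto b_1'$) carries $G$ isomorphically onto the induced subgraph $\cl(H)-\{a_2',b_2'\}$, while $a_2'$ is joined exactly to the base vertex $a_1'$ and to two of its base-neighbours $p',q'$, and likewise for $b_2'$. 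Thus $\cl(H)$ is again a triangle-free cubic graph carrying the same kind of apex gadget, except that the single apex $x$ has given birth to the two apexes $a_2',b_2'$ (the two claws containing $x$ as a leaf). Tracking the surplus $s=\#\{\text{claws}\}-|V|$, which equals the number of apexes, this reads $s\mapsto 2s$; starting from $s=1$ for $H$, the orders would satisfy $|V(\cl^{k}(H))|=|V(G)|+2^{k}$ and diverge.

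The main obstacle is that this clean doubling is guaranteed only when the second neighbours $c,p,q,r,s$ and their neighbourhoods are pairwise distinct and span no short cycles; for a general triangle-free cubic $G$ the proliferating gadgets can overlap, and one must rule out destructive interference — coincidences among the claws, or new adjacencies that annihilate the extra claws. I expect this to be handled by the monotonicity of $\cl$ with respect to induced subgraphs — namely, if $S$ is an induced subgraph of a graph then $\cl(S)$ is an induced subgraph of its claw graph — which lets one follow a single non-interfering lineage of gadgets and thereby secure the weaker but sufficient bound $|V(\cl^{k}(H))|\ge |V(H)|+k$. Hence the orders are unbounded and, by the Fundamental Theorem, $H$ is $\cl$-expanding.
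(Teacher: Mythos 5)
Your route is the same as the paper's: a direct claw count showing that $\cl(H)$ is (isomorphic to) $G$ together with two new hat vertices, each attached to a base vertex and two of its neighbours, followed by iteration of this doubling. Your base step is correct and in fact more carefully executed than the paper's version of it: you verify that $x$ and $y$ centre no new claws, that exactly one extra claw appears at each of the two neighbours of $y$ joined to $x$, that the primed claws induce a copy of $G$ (via Theorem \ref{thm:periodic} and the fact that claws of an induced subgraph are claws of the whole graph), and that each new claw $a_2'$ has neighbourhood $\{a_1',p',q'\}$ inducing a path, i.e.\ is again a hat. All of that is right.

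The problem is the iteration, and your own final paragraph names it without closing it. The clean statement ``$\cl^k(H)$ contains $G$ plus $2^k$ hats'' needs the spawned hats not to interfere, and your proposed repair does not deliver even the weaker linear bound. Monotonicity of $\cl$ under induced subgraphs is true, but a ``single non-interfering lineage'' carries only one hat per generation, which gives no growth at all; the bound $|V(\cl^k(H))|\ge |V(H)|+k$ would instead require exhibiting $k+1$ pairwise non-interfering hats inside $\cl^k(H)$, a selection you never justify and which can already fail at the first step. Concretely, take $G=K_{3,3}$ with parts $\{y,u,v\}$ and $\{a,b,c\}$ and the hat $x\sim y,a,b$: since $N_G(a)=N_G(b)=\{y,u,v\}$, the two hats $a_2'$ and $b_2'$ of $\cl(H)$ are both adjacent to $u'$ and $v'$, so no two non-interfering hats exist; a direct count nevertheless gives $|V(\cl^2(H))|=12>8=|V(\cl(H))|$, so the theorem survives but not by your mechanism. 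To be fair, the paper's own proof is no stronger here --- it simply asserts ``by iteration'' that the hats double, while itself conceding that the relevant vertices ``are not necessarily distinct'' --- so you have reproduced the published argument, located exactly where its missing lemma sits (growth of the claw count for $G$ plus arbitrarily overlapping hats), but not supplied that lemma.
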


\begin{proof}
 Let $G$ be a triangle-free cubic graph.  Add a new vertex $x$ and join it by an edge to a vertex, say $y$, in $G$.  Make $x$ adjacent to exactly two neighbors of $y$ in $G$.  This is illustrated in Figure \ref{fig:hat}.   We shall call $x$ a \emph{hat} of $G$. 
\begin{figure}[h]
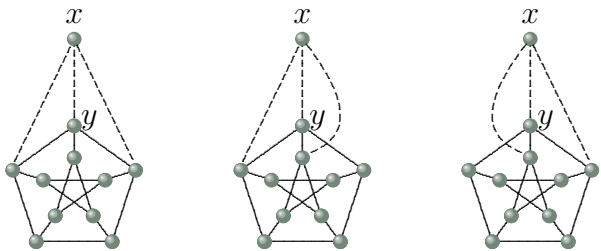

$$\pic
\SetUnits[cm] (1,1,1)
\SolidVertex{xanadu}
\Petersen(5,2) [1cm] (0,0) 
\Vertex(0,2)
\Petersen(5,2) [1cm] (3,0)
\Vertex(3,2)
\Petersen(5,2) [1cm] (6,0)
\Vertex(6,2)
\ondashes
\Edge(0,2) (0,0.88)
\Edge(0,2) (-0.82,0.3) (0,2) (0.82,0.3)
\Edge(3,2) (3,0.88)
\Edge(3,2) (2.18,0.3)
\CEdge(3,2) (3.5,1) (3,0.46)
\Edge(6,2) (6,0.88) (6,2) (6.82,0.3)
\CEdge(6,2) (5.5,1) (6,0.46)
\Align[c] ($x$) (0,2.3) (3,2.3) (6,2.3)
\Align[c] ($y$) (0.2,0.92) (3.2,0.92) (6.2,0.92)
\cip$$
\caption{Adding a hat to a triangle-free cubic graph.}
\label{fig:hat}
\end{figure}

Let $H$ denote the graph resulting from $G$ by adding a hat.  We claim that $H$ is a $\cl$-expanding graph.  Since $G$ is an induced graph of $H$ then $\cl(G)$ is an induced subgraph of $\cl(H)$.  Let us denote by $G'$ the graph $\cl(G)$.  By Theorem \ref{thm:periodic}, $G'\simeq G$.  To form $\cl(H)$, we add the $G'$ the additional clawss arising from the addition of the hat $x$.  Please refer to Figure \ref{fig:addhat}.  The vertices shown in the drawing are not necessarily distinct.  For instance, $a$ can be the same vertex as $y_3$ because no triangle will arise in this case.

\begin{figure}[h]
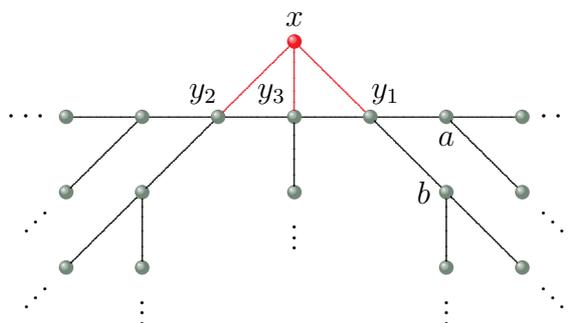

$$\pic
\SetUnits[cm] (1,1,1)
\SolidVertex{xanadu}
\Path(-3,0) (-2,0) (-1,0) (0,0) (1,0) (2,0) (3,0)
\SolidVertex{red}
\PlotColor{red}
\Align[c] ($x$) (0,1.3)
\Align[c] ($y_1$) (1.2,0.3)
\Align[c] ($y_2$) (-1.2,0.3)
\Align[c] ($y_3$) (-0.3,0.3)
\Align[c] ($a$) (2,-.3)
\Align[c] ($b$) (1.7,-1)
\Vertex(0,1) \Edge(0,1) (0,0) (0,1) (1,0) (0,1) (-1,0)
\PlotColor{black}
\SolidVertex{xanadu}
\Vertex(-2,-1) (-3,-2) \Edge(-1,0) (-2,-1) (-2,-1) (-3,-2)
\Vertex(-3,-1) \Edge(-2,0) (-3,-1)
\Vertex(-2,-2) \Edge(-2,-1) (-2,-2)
\Vertex(0,-1) \Edge(0,0) (0,-1)
\Vertex(2,-1) (3,-2) \Edge(1,0) (2,-1) (2,-1) (3,-2)
\Vertex(2,-2) \Edge(2,-1) (2,-2)
\Vertex(3,-1) \Edge(2,0) (3,-1)
\Align[c] ($\cdots$) (3.5,0) (-3.5,0)
\Align[c] ($\vdots$) (0,-1.5) (2,-2.5) (-2,-2.5)
\Align[c] (\copy\bdots) (3.4,-1.4) (3.4,-2.4)
\Align[c] (\copy\fdots) (-3.4,-1.4) (-3.4,-2.4)
\cip$$
\caption{A hat $x$ of a triangle-free cubic graph}
\label{fig:addhat}
\end{figure}

In $G$, the claw $y_1'$ has central vertex $y_1$ and three other vertices $a, b, y_3$. In $H$ there  is another  claw with center at $y_1$ but different from the claw $y_1'$.  This claw has central vertex $y_1$ also and three other vertices $a, b, x$.  Let us denote this by $y_1''$.  The claw $y_1''$ is adjacent to $y_1'$, $a'$, and $b'$ in $\cl(H)$.  Thus $y_1''$ is a hat of $G'$.  Likewise, there is a second  hat $y_2''$ of $G'$.  Hence, $\cl(G)$ is isomorphic to the graph $G$ plus two hats $y_1''$ and $y_2''$.  By iteration, $\cl^2(G)$ is a graph containing a subgraph isomorphic to $G$ plus four hats.  In general, $\cl^k(G)$ contains a subgraph isomorphic to $G$ plus $2^k$ hats.  Hence, $G$ is $\cl$-expanding.
\end{proof}

\section{Proof of Main Result}

Let us now prove the Fundamental Theorem on Graph Operators, Theorem 1.  

 \begin{proof}  Let $\Gamma$ be a graph operator and $G$, a graph in the domain of $\Gamma$.  It can be the case that $\Gamma^k(G)=\emptyset$ just like in the case of the line graph operator.  It can also be the case that $\displaystyle\lim_{k\to\infty}|V(\Gamma^k(G))|=\infty$ just like the line graph operator.  All we need to show is that if $G$ is neither $\Gamma$-vanishing nor $\Gamma$-expanding, then it must be $\Gamma$-periodic.
 
 Consider the sequence $\langle G, \Gamma(G), \Gamma^2(G), \ldots \rangle$.  If $G$ is neither $\Gamma$-vanishing nor $\Gamma$-expanding, then no element of the sequence is $\emptyset$. Furthermore, the order of the graphs in the sequence must be bounded.  Hence, there exists a positive integer $N$ such that $0\le |V(\Gamma^k(G))|\le N$.   

 There are only a finite number of graphs of order not exceeding $N$ up to isomorphism.  Therefore, there are repetitions of elements in the sequence.  This means that some graphs in the sequence are isomorphic.  

Let $\Gamma^m(G)$ be the first graph in the sequence that is repeated.  Then $m\ge0$ and the graphs $\Gamma^k(G)$, $0\le k\le m$ are non-isomorphic.  Let $\Gamma^{m+p}(G)$ be the first repetition of $\Gamma^m(G)$ and let $n\ge m$, say $n=m+t$.  
\begin{align*}
\Gamma^{n+p}(G)&\simeq \Gamma^{m+t+p}(G)\\
&\simeq \Gamma^t(\Gamma^{m+p}(G))\\
&\simeq \Gamma^t(\Gamma^m(G))\\
&\simeq \Gamma^{m+t}(G)\\
&\simeq \Gamma^n(G)
\end{align*}  
Therefore, $G$ is $\Gamma$-periodic. 
\end{proof}

\section{Concluding Remarks}
We know that a graph in the domain of any given graph operator can only be vanishing, expanding, or periodic.  Note however, that there may not exist graphs in some of the three categories. 

Consider the graph operator $\mathcal C(G)=\overline G$, the complement of $G$.  If $G$ is self-complementary, then $G$ is periodic with period 1.  If $G$ is not self-complementary, then $G$ is periodic with period 2 since $\overline{\overline{G}}=G$.  So all graphs are periodic with respect to the operator $\mathcal C$.  There are no vanishing graphs and no expanding graphs with respect to this graph operator.

Let the operator $\mathcal S$ defined as follows:  Given a graph $G$, let $\mathcal S(G)$ be the graph obtained from $G$ by subdividing every edge of $G$.  To subdivide an edge $ab$ means to remove the edge $ab$ from the graph and replace it by two new edges $ax$ and $xb$ where $x$ is a new vertex not in $G$.  For this operator, a graph without any edge is periodic with period 1.  A graph with at least one edge is an expanding graph.  There are no vanishing graphs with respect to this operator.

The shadow graph operator \cite{shadow} assigns to a graph $G$ the graph $D_2(G)$ formed by adding a copy $G'$ of $G$ with vertex-set $V(G')=\{v'|v\in V(G)\}$.  The edges of $D_2(G)$ consists of the edges of $G$ and $G'$ plus all edges of the form $v'x$ where $x$ is a neighbor of $v$, $v\in V(G)$.  Note that the order of $D_2(G)$ is twice the order of $G$.  Thus, every graph is $D_2$-expanding.  There are no vanishing graphs and no periodic graphs with respect to the shadow graph operator.

\end{document}